\newtheorem{theorem}{Theorem}[section]
\newtheorem{lemma}[theorem]{Lemma}
\newtheorem{proposition}[theorem]{Proposition}
\theoremstyle{definition}
\theoremstyle{remark}
\numberwithin{equation}{section}
\renewcommand{\le}{\leqslant}
\renewcommand{\ge}{\geqslant}
\renewcommand{\leq}{\leqslant}
\renewcommand{\phi}{\varphi}
\begin{document}

\title{Differential-Difference Properties of Hypergeometric Series}

\author[N. Brisebarre]{Nicolas Brisebarre}
\address{Université de Lyon, CNRS, ENS de Lyon, Inria, Université
    Claude-Bernard Lyon~1, Laboratoire LIP (UMR 5668), Lyon, France.}
\curraddr{}
\email{Nicolas.Brisebarre@ens-lyon.fr}
\thanks{This work was partly supported by the NuSCAP ANR-20-CE48-0014 project of the French \emph{Agence Nationale de la Recherche}.}

\author[B. Salvy]{Bruno Salvy}
\address{Université de Lyon, CNRS, ENS de Lyon, Inria, Université
    Claude-Bernard Lyon~1, Laboratoire LIP (UMR 5668), Lyon, France.}
\curraddr{}
\email{Bruno.Salvy@ens-lyon.fr}

\begin{abstract}
Six families of generalized hypergeometric series in a variable~$x$ and an arbitrary number of parameters are considered. Each of them is indexed by an integer~$n$. Linear recurrence relations in~$n$ relate these functions and their product by the variable~$x$. We give explicit factorizations of these equations as products of first order recurrence operators. Related recurrences are also derived for the derivative with respect to~$x$. These formulas generalize well-known properties of the classical orthogonal
polynomials.
\end{abstract}

\subjclass[2020]{Primary 33C20, 33C45}

\keywords{Generalized hypergeometric functions, orthogonal
polynomials, recurrence relations.}

\maketitle

\section{Introduction}
Among many identities, the classical families of orthogonal polynomials satisfy
a three-term recurrence
\begin{equation}\label{eq:3-term}
xp_n(x)=a_np_{n+1}(x)+b_np_n(x)+c_np_{n-1}(x),
\end{equation} 
where $a_n,b_n,c_n$ are rational functions of~$n$~\cite{OlverLozierBoisvertClark2010};
a differential-difference equation
\begin{equation}\label{eq:mixed}
\pi(x)p_n'(x)=\alpha_np_{n+1}+\beta_np_n+\gamma_np_{n-1},
\end{equation} 
with $\pi(x)$ a polynomial of degree at most~2 and
$\alpha_n,\beta_n,\gamma_n$ rational functions 
of~$n$~\cite{Lewanowicz2002};
they also satisfy a second order linear differential equation with
polynomial
coefficients~\cite{Szego1975}.

Furthermore, the classical families of Jacobi, Laguerre, Hermite and Bessel are
\emph{hypergeometric}. They can be expressed
in terms of the functions ${}_2F_1$ and ${}_2F_0$~\cite[\S18.5(iii),\S18.34]
{OlverLozierBoisvertClark2010}, special cases of the generalized hypergeometric
series (see \cref{sec:def_hgm} for definitions).

In this work, we focus on two properties of the classical orthogonal polynomials
that persist for some of the generalized hypergeometric series. The
first one is a
generalization of
the 3-term recurrence~\eqref{eq:3-term}: 
\begin{equation}\label{eq:X}
x(A_m(n)p_{n+m}(x)+\dots+A_0(n)p_n(x))=B_\ell(n)p_{n+\ell}
(x)+\dots+B_0(n)p_n(x),
\tag{$\mathcal X$}
\end{equation}
where the $A_i$ and $B_j$ are polynomials in~$n$, but do not depend on~$x$.
In other words, the sequence $(p_n)$ satisfies a linear recurrence with
coefficients that are polynomials of degree
at most~1 in~$x$. 

The second relation is a
differential-difference equation
of the form
\begin{equation}\label{eq:D}
\left(C_r(n)p_{n+r}(x)+\dots+C_0(n)p_n(x)\right)'=
D_s(n)p_{n+s}(x)+\dots+D_0(n)p_n(x),\tag{$\mathcal D$}
\end{equation}
where again $C_i$ and $D_j$ are polynomials in~$n$, but do not depend on~$x$. 

We did not find any explicit mention of the relation~\eqref{eq:D} for
classical orthogonal polynomials in the literature. It can be seen by
differentiating
the three-term
recurrence \cref{eq:3-term}, which allows one to rewrite $xp_n'$ as a linear combination of shifts of~$p_n$ and $p_n'$ with coefficients that do not depend on~$x$. Then by induction, for all nonnegative integers~$k$, $x^kp_n$ and $x^kp_n'$
can also be written that way. 

Another equation of interest is a mixed
difference-differential equation
\begin{equation}\label{eq:M}
\pi(x)p_n'(x)=E_{-s}(n)p_{n-s}(x)+\dots+E_{t}(n)p_{n+t}(x),
\tag{$\mathcal M$}
\end{equation}
with $\pi(x)$ a polynomial in~$x$ and $E_{-s}(n),\dots,E_{t}(n)$
rational functions
of~$n$ that do not depend on~$x$. When the $p_n$ are orthogonal
polynomials, such relations characterize  semi-classical polynomials~\cite{Maroni1987,Maroni1991}.
A generalization of the derivation above is given in 
\cref{prop:XM-give-XD}. It shows that any solution of both an
equation of type~\eqref{eq:X} and an equation of type~\eqref{eq:M} also
satisfies an equation of type~\eqref{eq:D}.
Actually, \cref{eq:D} is strictly more general, as there are 
functions satisfying equations of type~\eqref{eq:X}
and~\eqref{eq:D} that do not satisfy any equation of 
type~\eqref{eq:M}. (An example is given in \cref{subsubsec:DmgM}.)

\subsection*{Contribution} 
Understanding the extent to which relations~\eqref{eq:X}
and~\eqref{eq:D} exist for
hypergeometric polynomials was our initial motivation for this
study.
We focus on six families of
generalized
hypergeometric series, given in \cref{table:families}. (Basic definitions and
properties are recalled in \cref{sec:def_hgm}.)
We show that they satisfy equations of the
types~\eqref{eq:X} and~\eqref{eq:D} and we give explicit
factorizations of the
linear recurrences that appear in these equations, as products of first
order operators.
\begin{table}[t]
\begin{small}
\begin{align}
{}_{p}F_{m+q}&\!\left(\left.\begin{matrix}
(a_p)\\
\Delta(m;\lambda+n),(b_q)
\end{matrix}\right|x\right),\tag{I}\\
{}_{m+p}F_{q}&\!\left(\left.\begin{matrix}
\Delta(m;1-\lambda-n),(a_p)\\
(b_q)
\end{matrix}\right|x\right),\tag{II}\\
{}_{m+p}F_{m+q}&\!\left(\left.\begin{matrix}
\Delta(m;\mu+n),(a_p)\\
\Delta(m;\lambda+n),(b_q)
\end{matrix}\right|x\right),\tag{III}\\
{}_{m+p}F_{m+q}&\!\left(\left.\begin{matrix}
\Delta(m;1-\lambda-n),(a_p)\\
\Delta(m;1-\mu-n),(b_q)
\end{matrix}\right|x\right),\tag{IV}\\
{}_{2m+p}F_{q}&\!\left(\left.\begin{matrix}
\Delta(m;1-\lambda-n),\Delta(m;\mu+n),(a_p)\\
(b_q)
\end{matrix}\right|x\right),\tag{V}\\
{}_{p}F_{2m+q}&\!\left(\left.\begin{matrix}
(a_p)\\
\Delta(m;\lambda+n),\Delta(m;1-\mu-n),(b_q)
\end{matrix}\right|x\right).\tag{VI}
\end{align}
\end{small}
\caption{Families of generalized hypergeometric functions. There, $m$ denotes a
positive integer; $\lambda\neq\mu$ are two arbitrary, but distinct,
constants; $n$ is a nonnegative integer; $(a_p)$ denotes the sequence
of complex numbers
$a_1,\dots,a_p$ and likewise for~$(b_q)$; $\Delta(m;x)$ is the
sequence $x/m,(x+1)/m,\dots,(x+m-1)/m$.}
\label{table:families}
\end{table}

Family~I generalizes the Bessel functions
\[J_\nu(z)=\frac{\left(\frac z2\right)^\nu}{\Gamma(\nu+1)}{}_0F_1\left
(\left.\begin{matrix}\\ \nu+1\end{matrix}\right|-\frac{z^2}4\right)\]
up to a monomial factor.

Families II and V generalize the extended Laguerre and Jacobi
functions
\begin{align}\label{eq:exLagJac}
&
{}_{p+1}F_{q}\!
\left(
\left.
\begin{matrix}
-n-\lambda,a_1,\dots,a_p\\
b_1,\dots,b_q
\end{matrix}
\right|
x
\right),
&
{}_{p+2}F_{q}\!\left(\left.\begin{matrix}
-n-\lambda,n+\mu,a_1,\dots,a_p\\
b_1,\dots,b_q
\end{matrix}\right|x\right).
\end{align}
The classical Laguerre and Jacobi polynomials are recovered, up to a simple
change of
variable, when
$\lambda=0,p=0,q=1$.

The case $\lambda=0$ of Family~II was used by
Brafman~\cite{Brafman1957} to produce several generating functions.
Other generating functions are known for cases of 
families~II, IV, V that correspond to polynomials~\cite[chap.~7]{SrivastavaManocha1984}.

\subsection*{Related work}
The closest relation to our work that we could find in the literature are
explicit
equations of the types~\eqref{eq:D} and~\eqref{eq:X} given by Fields, Luke
and Wimp for the extended Laguerre
and Jacobi functions of \cref{eq:exLagJac}
in their study of Pad\'e approximants of hypergeometric functions~\cite{FieldsLukeWimp1968,Wimp1975,Luke1975}. 
In the
case of Jacobi functions, this was later improved by Lewanowicz~
\cite{Lewanowicz1985}.

We show in \cref{sec:examples} how the results of Fields, Luke and Wimp are
recovered as special cases of our main result.

\section{Definitions and Notation}\label{sec:def-and-not}
\subsection{Generalized Hypergeometric Function}\label{sec:def_hgm}
We gather here basic material on \emph{generalized hypergeometric series} (see \cite[chap.16]
{OlverLozierBoisvertClark2010}).
Those are classically
defined as the formal power series
\begin{equation}\label{eq:defhgm}
{}_pF_q\left(\left.\begin{matrix}a_1,\dots,a_p\\ b_1,\dots,b_q
\end{matrix}\right|x\right):=\sum_{k\ge0}\frac{(a_1)_k\dotsm(a_p)_k}{
(b_1)_k\dotsm
(b_q)_k}\frac{x^k}{k!},
\end{equation}
where $(a)_k=a(a+1)\dotsm(a+k-1)$ is the Pochhammer symbol and it is assumed
that none of the $b_i$ is a nonpositive integer, so that the denominators are
nonzero for all~$k$. 
In the hypergeometric representations of the classical
orthogonal polynomials, the parameter $a_1$ is $-n$, so that the series
becomes a polynomial of degree~$n$. In general, the power series ${}_pF_q$ is a
polynomial if one of
the~$a_i$ is a negative integer. Except in this situation, it is
divergent in the neighborhood of~0 when $p>q+1$, and convergent
otherwise.

If $U_k$ denotes the coefficient of~$x^k$ in the power 
series~\eqref{eq:defhgm}, then the definition of the
Pochhammer symbol implies
\begin{equation}\label{eq:rec_hgm}
(k+1)\prod_{j=1}^q(b_j+k)U_{k+1}=\prod_{j=1}^p(a_j+p)U_k.
\end{equation}
It follows that the
generalized hypergeometric series satisfies a linear differential equation~\cite
[16.8.3]{OlverLozierBoisvertClark2010}
\begin{equation}\label{deq:hgm}
\left(\vartheta(\vartheta+b_{1}-1)\cdots(\vartheta+b_{q}-1)-z(\vartheta+a_{1})%
\cdots(\vartheta+a_{p})\right)w=0,
\end{equation}
where $\vartheta=z\frac{d}{dz}$ and whose order, $\max(p,q+1)$, is minimal.

\subsection{Linear Recurrence Operators}\label{sec:linrecop}
Our results and proofs are more conveniently expressed using linear recurrence
operators. We denote by $S_n$ the \emph{shift operator} that maps a sequence $
(u_n)$ to the sequence $(u_{n+1})$. The linear recurrence operators we consider
are polynomials in~$S_n$ with coefficients that are polynomial or rational
functions in~$n$. Their addition is that of commutative polynomials and
their multiplication follows from the commutation rule $S_na(n)=a(n+1)S_n$, for
any rational function~$a(n)$. These polynomials admit a Euclidean division on
the right, and a Euclidean algorithm that allows for the definition of greatest
common right divisors and least common left multiples (denoted
lclm)~\cite{Ore1933}. They are both defined up to a rational
factor. These
notions are exemplified in the following two results that are used
in the next section.
\begin{lemma}For distinct $a_1,\dots,a_p$ that do not depend on~$n$,
\begin{multline*}
\operatorname{lclm}\!\left(\frac{n+a_1+1}{n+1}S_n-1,\dots,\frac{n+a_p+1}
{n+1}S_n-1\right)\\ 
=\left(\frac{n+a_p+p}{n+1}S_n-1\right)\dotsm\left(
\frac{n+a_1+1}{n+1}S_n-1\right)=:Q_p.
\end{multline*}
\end{lemma}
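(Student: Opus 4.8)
The plan is to prove both equalities by exhibiting the right-hand side $Q_p$ as a common left multiple of the first-order operators $L_i := \frac{n+a_i+1}{n+1}S_n - 1$, and then arguing it is the \emph{least} such by a degree (order) count. The key observation that makes this work is the following commutation-type identity: for indices $i \neq j$, the first-order operator $M_{i,j} := \frac{n+a_i+2}{n+1}S_n - 1$ satisfies
\[
\left(\frac{n+a_i+2}{n+1}S_n-1\right)\left(\frac{n+a_j+1}{n+1}S_n-1\right)
= \left(\frac{n+a_j+2}{n+1}S_n-1\right)\left(\frac{n+a_i+1}{n+1}S_n-1\right),
\]
which one checks by expanding both sides using $S_n a(n) = a(n+1)S_n$; both equal $\frac{(n+a_i+2)(n+a_j+1)}{(n+1)(n+2)}S_n^2 - \left(\frac{n+a_i+2}{n+1}+\frac{n+a_j+1}{n+1}\right)S_n + 1$, and the coefficient of $S_n$ is symmetric in $i,j$ up to the shift bookkeeping. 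More generally one needs the shifted version with $+p$ and $+q$ in place of $+2$ and $+1$; the point is that the telescoping product $\left(\frac{n+a_p+p}{n+1}S_n-1\right)\dotsm\left(\frac{n+a_1+1}{n+1}S_n-1\right)$ is invariant under permuting the $a_i$.

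First I would show $L_i$ is a right divisor of $Q_p$ for each $i$. Using the permutation invariance just described, reorder the factors in $Q_p$ so that the $a_i$-block is innermost: $Q_p = (\text{product of }p-1\text{ first-order operators}) \cdot \left(\frac{n+a_i+1}{n+1}S_n-1\right) = R_i \cdot L_i$. Hence each $L_i$ divides $Q_p$ on the right, so $Q_p$ is a common left multiple of $L_1,\dots,L_p$, and therefore $\operatorname{lclm}(L_1,\dots,L_p)$ divides $Q_p$ on the right.

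Next I would pin down the order. The operator $\operatorname{lclm}(L_1,\dots,L_p)$ has order at most $p$ (a standard fact: the lclm of $p$ operators of order $1$ has order $\le p$, since the solution space it annihilates has dimension at most $p$). On the other hand $Q_p$ has order exactly $p$. Since $\operatorname{lclm}(L_1,\dots,L_p)$ is a nonzero right divisor of $Q_p$ of order $\ge$ that of the lclm and $\le p$, I must rule out order $< p$; this is where the hypothesis that the $a_i$ are \emph{distinct} enters. If the lclm had order $< p$, then some $L_j$ would be a right divisor of $\operatorname{lclm}(L_1,\dots,L_{\hat j},\dots,L_p)$ (the lclm of the other $p-1$), forcing the sequence annihilated by $L_j$, namely $u_n = c\,(a_j+1)_n/n!$ up to scaling, to be annihilated by each $L_i$ as well; but $L_i u = 0$ has one-dimensional solution space spanned by $(a_i+1)_n/n!$, and for $i\neq j$ these geometric-like sequences are linearly independent precisely because $a_i \neq a_j$ (the ratio $u_{n+1}/u_n$ distinguishes them). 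Contradiction. Hence $\operatorname{lclm}(L_1,\dots,L_p)$ has order exactly $p$, equals $Q_p$ up to a rational left factor, and a comparison of leading coefficients (both $\prod_i (n+a_i+i)/\prod\cdots$, or simply normalizing so the top coefficient matches) fixes the constant, giving the stated equality. The main obstacle is the bookkeeping in the permutation-invariance identity for $Q_p$ — getting the shifted numerators $n+a_i+i$ to match up correctly after reordering — but it is a routine induction on $p$ once the two-factor case above is in hand.
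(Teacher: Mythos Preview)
Your approach is sound in outline and genuinely different from the paper's, but the order argument has a real gap. When you say that $L_j$ dividing $\operatorname{lclm}(L_i:i\neq j)$ forces the solution of $L_j$ ``to be annihilated by each $L_i$ as well'', that is not what follows: it only forces the solution of $L_j$ to lie in the solution space of the $(p{-}1)$st lclm, which (under an inductive hypothesis) is the \emph{span} of the solutions of the $L_i$, $i\neq j$. Your ratio remark shows no two of these sequences are proportional, but that is not linear independence for $p>2$; you still need a Casoratian/Vandermonde-type argument, or an inductive elimination (apply $L_1$ to a dependence relation, etc.). Also two small slips: after the shift the $S_n^2$ coefficient in your two-factor identity is $\frac{(n+a_i+2)(n+a_j+2)}{(n+1)(n+2)}$, not $(n+a_j+1)$; and the solution of $L_j u=0$ is $u_n=c\,n!/(a_j+1)_n$, the reciprocal of what you wrote.

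By contrast, the paper avoids solution spaces entirely: it computes by induction the exact remainder of the right division of $Q_p$ by $S_n-\frac{n+1}{n+b+1}$, obtaining $R_{p,b}(n)=\prod_i(a_i-b)\big/\prod_{i}(n+b+i)$. Setting $b=a_i$ makes this vanish (so each $L_i$ right-divides $Q_p$), while setting $b=a_{p+1}$ with $a_{p+1}$ distinct shows $L_{a_{p+1}}$ does \emph{not} divide $Q_p$, which forces $\deg M_{p+1}=\deg M_p+1$ and closes the induction. Your permutation-invariance route is more conceptual and explains \emph{why} each $L_i$ divides $Q_p$; the paper's remainder formula is more explicit and sidesteps any appeal to linear independence of sequences. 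Either works once the gap above is filled.
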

\begin{proof}
Let $M_p$ denote the lclm and $Q_p$ the product. 
We first show that the remainder
of the right division of~$Q_p$ by 
\[L_b:=S_n-\frac{n+1}{n+b+1}\]
is
\[R_{p,b}(n)=\frac{(a_1-b)\dotsm(a_p-b)}{(n+b+1)\dotsm(n+b+p)}.\]
For $p=0$, the remainder of the division of~$Q_0=1$
by~$L_b$ is~1, which corresponds to the empty product.
Next, by induction, since $Q_p\bmod L_b=R_{p,b}(n)$, it is sufficient to
consider
\begin{align*}
& \left(\frac{n+a_{p+1}+p+1}{n+1}S_n-1\right)R_{p,b}(n)\\
&\quad=
\frac{n+a_{p+1}+p+1}{n+1}R_{p,b}(n+1)S_n-R_{p,b}(n)\\
&\quad=
\frac{n+a_{p+1}+p+1}{n+1}R_{p,b}(n+1)L_b+\left(\frac{n+a_{p+1}+p+1}
{n+b+1}R_{p,b}(n+1)-R_{p,b}(n)\right).
\end{align*}
The remainder of the right division by~$L_b$ factors as
\[R_{p,b}(n)\left(\frac{n+a_{p+1}+p+1}
{n+b+p+1}-1\right)=R_{p,b}(n)\frac{a_{p+1}-b}{n+b+p+1}=R_{p+1,b},\]
which concludes the induction.
Taking~$b=a_i$ for $i=1,\dots,p$ makes $R_{p,b}=0$, showing that $M_p$
divides~$Q_p$ for all~$p$. 

Another
induction on~$p$ establishes that $M_p=Q_p$. The case
$p=1$ is clear.
If $a_{p+1}$ is distinct
from the other~$a_i$, taking
$b=a_{p+1}$ shows that the remainder~$R_{p+1,a_{p+1}}$ is not~0,
which implies that $\deg M_{p+1}=\deg M_p+1=\deg Q_{p+1}$, 
concluding the proof since $M_
{p+1}$ divides $Q_{p+1}$.
\end{proof}
\begin{lemma}\label{lemma:explicit-coeffs}
For arbitrary $a_1,\dots,a_p$ that do
not depend on~$n$, let
$Q_p$ be
the product defined in the previous lemma. Then, the coefficients of its
expansion in powers of~$S_n$ are given by
\[Q_p=\sum_{m=0}^p{\frac{c_m(n)}{(p-m)!}\frac{\prod_{i=1}^p(n+a_i+m)}{\prod_
{i=1}^m
{
(n+i)}}S_n^m},\]
with 
\[c_m(n)={}_{p+1}F_p
\!\left(\left.
\begin{matrix}m-p,n+m+a_1+1,\dots,n+m+a_p+1\\
n+m+a_1,\dots,n+m+a_p\end{matrix}\right|1\right).\]
\end{lemma}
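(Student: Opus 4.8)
The plan is to prove the closed form for the coefficients of $Q_p$ by induction on $p$, tracking how each new first-order factor acts on the hypergeometric coefficients $c_m(n)$. First I would fix notation: write $Q_p=\sum_{m=0}^p q_{p,m}(n)S_n^m$ and denote the proposed coefficient by
\[
\widehat q_{p,m}(n)=\frac{c_m(n)}{(p-m)!}\frac{\prod_{i=1}^p(n+a_i+m)}{\prod_{i=1}^m(n+i)},
\]
with $c_m(n)$ the stated ${}_{p+1}F_p$ evaluated at $1$ (note $c_m$ depends on $p$ as well; I would make that dependence explicit, say $c_m=c_{p,m}$). The base case $p=0$ gives $Q_0=1$, matching $\widehat q_{0,0}=c_{0,0}/0!=1$ since the ${}_{1}F_{0}(0;\,;1)$ terminates at its first term. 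For the inductive step I would use the recursive definition $Q_{p+1}=\bigl(\tfrac{n+a_{p+1}+p+1}{n+1}S_n-1\bigr)Q_p$, which by the commutation rule $S_na(n)=a(n+1)S_n$ yields
\[
q_{p+1,m}(n)=\frac{n+a_{p+1}+p+1}{n+1}\,q_{p,m-1}(n+1)-q_{p,m}(n),
\]
with the conventions $q_{p,-1}=q_{p,p+1}=0$.

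The heart of the argument is then to verify that $\widehat q_{p,m}$ satisfies this same two-term relation. Substituting the closed form, the products of the shape $\prod_i(n+a_i+m)$ and the factorials recombine cleanly — the factor $(n+a_{p+1}+p+1)$ supplies exactly the missing term turning a length-$p$ product into a length-$(p+1)$ product, and the $\tfrac1{n+1}$ interacts with the shifted denominator $\prod_{i=1}^{m}(n+1+i)=\tfrac1{n+1}\prod_{i=1}^{m+1}(n+i)$ — so that after clearing these common rational prefactors the identity reduces to a \emph{contiguous relation} among three ${}_{p+1}F_p(\cdots;1)$ series: the one defining $c_{p+1,m}(n)$, and the ones defining $c_{p,m-1}(n+1)$ and $c_{p,m}(n)$. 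Concretely, one shows
\[
c_{p+1,m}(n)=\alpha\,c_{p,m-1}(n+1)+\beta\,c_{p,m}(n)
\]
for explicit rational $\alpha,\beta$ in $n,m,a_i$, which I would check termwise: writing out the $k$-th summand of each ${}_{p+1}F_p$ using Pochhammer symbols and the upper parameter $m-p$ (resp. $m-p-1$) that truncates the sum, the ratio of consecutive terms telescopes and the claimed identity becomes a polynomial identity in $k$ that one verifies directly. An alternative, possibly slicker, route is to recognize $c_{p,m}(n)$ — or rather the whole coefficient $q_{p,m}$ — via the Chu–Vandermonde-type evaluation implicit in \cref{lemma:explicit-coeffs}'s companion, i.e. extract $q_{p,m}$ by expanding the product $Q_p$ combinatorially as a sum over which factors contribute their $S_n$ term, obtaining
\[
q_{p,m}(n)=\sum_{1\le i_1<\cdots<i_m\le p}\ \prod_{j}\bigl(\text{shifted rational factors}\bigr),
\]
and then resumming this symmetric sum into the ${}_{p+1}F_p$ at $1$.

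I expect the main obstacle to be the bookkeeping of the \emph{index shifts}: in the inductive step the term $q_{p,m-1}$ is evaluated at $n+1$, so inside its defining ${}_{p+1}F_p$ every upper and lower parameter $n+(m-1)+a_i+1$ becomes $n+m+a_i+1$, which fortuitously matches the parameters appearing in $c_{p+1,m}(n)$ — but making this "fortuitous match" into a clean line of algebra, and simultaneously handling the truncation parameter jumping from $m-1-p$ to $m-(p+1)=m-p-1$ (same value, reached two different ways), requires care. Once the three-term contiguous relation is pinned down and checked termwise, the induction closes immediately and the lemma follows; I would also remark at the end that setting all $a_i$ equal, or taking $p$ small, recovers the expected elementary formulas as a sanity check.
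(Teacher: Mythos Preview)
Your proposal is correct and follows essentially the same route as the paper: induction on~$p$, base case $Q_0=1$, the two-term recurrence $q_{p+1,m}(n)=\tfrac{n+a_{p+1}+p+1}{n+1}q_{p,m-1}(n+1)-q_{p,m}(n)$ from the commutation rule, extraction of the common rational prefactor, and reduction to a three-term relation among the hypergeometric values $c_{p+1,m}(n)$, $c_{p,m-1}(n+1)$, $c_{p,m}(n)$ verified by comparing the $k$th summands. The paper simply carries out explicitly the termwise check you describe, arriving at the identity $\tfrac{(n+a_{p+1}+p+1)c_{m-1}(n+1)-(p+1-m)c_m(n)}{n+a_{p+1}+m}=c_{p+1,m}(n)$; your anticipated ``fortuitous match'' of parameters after the shift $n\mapsto n+1$, $m\mapsto m-1$ is exactly what makes this work.
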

\begin{proof}
The proof is by induction. For $p=0$, the empty product $Q_0$ is equal to~1 and
so is the
hypergeometric series, since $0$ is its top parameter. 
Next, multiplying the sum for~$Q_p$ by 
\[\frac{n+a_{p+1}+p+1}{n+1}S_n-1\] shows that the coefficient of~$S_n^m$
in~$Q_{p+1}$ is
\begin{gather*}
\frac{n+a_{p+1}+p+1}{n+1}\frac{c_{m-1}(n+1)}{(p-m+1)!}\frac{\prod_{i=1}^p
(n+1+a_i+m-1)}{\prod_{i=1}^{m-1}{(n+1+i)}}
-\frac{c_m(n)}{(p-m)!}\frac{\prod_{i=1}^p(n+a_i+m)}{\prod_{i=1}^m
{
(n+i)}}\\
=\frac{\prod_{i=1}^{p+1}{(n+a_i+m)}}{(p+1-m)!\prod_{i=1}^m(n+i)}
\frac{(n+a_{p+1}+p+1)c_{m-1}(n+1)-(p+1-m)c_m(n)}{n+a_{p+1}+m}.
\end{gather*}
The coefficient~$c_{k,m,n,p}$ of $x^k$ in the hypergeometric series
defining~$c_m(n)$ is
\[(m-p)_k\frac{(n+m+a_1+k)\dotsm(n+m+a_p+k)}{(n+m+a_1)\dotsm
(n+m+a_p)},\qquad 0\le k\le m-p\]
and~0 for $k>m-p$. From there, 
\begin{align*}
&\frac{(n+a_{p+1}+p+1)c_{k,m-1,n+1,p}-(p+1-m)c_{k,m,n,p}}{n+a_{p+1}+m}\\
&\qquad=\frac{c_{k,m,n,p}(m-p-1)}{n+a_{p+1}+m}\left(\frac{n+a_{p+1}+p+1}
{m-p+k-1}+1\right)\\
&\qquad=c_{k,m,n,p}\frac{m-p-1}{m-p+k-1}\frac{n+m+a_{p+1}+k}{n+m+a_
{p+1}}=c_{k,m,n,p+1}.\qedhere
\end{align*}
\end{proof}

\section{Main result}
\begin{theorem}\label{thm:main}
Let $F_n$ be in any of the six families
in \cref{table:families}, with parameters such that~$F_n$ is a
well-defined hypergeometric series for all~$n\in\mathbb N$. 
Then the power series
$xF_n$,
$F_n'$ and $F_n$ are related by the following two recurrence equations
\[\begin{split}
&\left((\epsilon_1\epsilon_2S_n)^{\theta}
\mathcal L_{p}^{q+1-2m\chi\epsilon_1}
\mathcal A\right)(xF_n)=\\
&\qquad\left(
m^{m(\epsilon\epsilon_1-\epsilon_2)}\left(
\frac{(\lambda+n)_m}{
(\mu+n)_m^\epsilon}\right)^{\!\epsilon_1}
(\epsilon_1\epsilon_2S_n)^{m-\theta}
\mathcal L_{q+1}^{p+2m\chi\epsilon_1}
\mathcal F_{0,q+1}\mathcal B\right)
(F_n),\\
&\left(
(\epsilon_1\epsilon_2S_n)^{m-\theta}
\mathcal L_{q}^{p+2m\chi\epsilon_1}
\mathcal B\right)(F_n')=\\
&\qquad
\left(
m^{m(\epsilon_2-\epsilon\epsilon_1)}\left(
\frac{(\lambda+n)_m}{
(\mu+n)_m^\epsilon}\right)^{\!-\epsilon_1}
\!(\epsilon_1\epsilon_2S_n)^{\theta}
\mathcal L_{p}^{q-2m\chi\epsilon_1}
\mathcal A\right)(F_n),
\end{split}\]
where $\epsilon=1$ if $F_n$
depends on~$\mu$ and $\epsilon=0$ otherwise; $\epsilon_1$ and
$\epsilon_2$ in $\{-1,1\}$ are
the signs in front of~$\mu$ and~$\lambda$ in the parameters of the
hypergeometric series
(for convenience, we let
$\epsilon_1=1$ when $\epsilon=0$), and
\begin{gather*}
a(n,k)=\frac{n+\lambda}
{n+\epsilon\mu}\,
\frac{n+\epsilon(\mu+\epsilon_1mk)}
{n+\lambda+\epsilon_2mk},\qquad\alpha(n,k)=\frac1
{n+\lambda+\epsilon_2mk},\\
C_i(n)=
\frac{n+\lambda}{n+\epsilon\mu}
(n+\epsilon\mu-\epsilon\epsilon_1\epsilon_2(\lambda+n+i)),
\quad \mathcal C_i=\frac1{C_i(n)}\left(S_n-\frac{\epsilon\epsilon_1
(n+\lambda)}{\epsilon_2
(n+\epsilon\mu)}\right),
\\
B_i(n,c)=-m\epsilon_2\alpha
(n+i-1,-c)C_{i-1}(n),\\
\mathcal F_{c,i}=\frac1{B_i(n,c)}\left(S_n-\frac{a(n,-c)\alpha
(n+i-1,-c)}
{\alpha(n,-c)}\right),\\
\mathcal B=\mathcal F_{b_q-1,q}\dotsm\mathcal F_
{b_1-1,1},\quad 
\mathcal A=\mathcal F_{a_p,p}\dotsm\mathcal F_{a_1,1},
\quad 
\mathcal L_i^{j}=\begin{cases}\mathcal C_{j-1}\dotsm\mathcal
C_i&\text{if $i<j$},\\ 1&\text{otherwise},\end{cases}\\
\theta=\begin{cases}m&\quad\text{if
$\epsilon_1=1$,}\\0&\quad\text{otherwise,}\end{cases}
\quad
\chi=\begin{cases}0&\quad\text{if $\epsilon_1=\epsilon_2$,}\\
1&\quad\text{otherwise.}
\end{cases}
\end{gather*}
In the special case when~$\epsilon = 1$ and $\epsilon_1=\epsilon_2$ (families III
and~IV) and furthermore~$\mu-\lambda\in\{0,\dots,\max(p-1,q)\}$, these
formulas do not apply directly as a required~$C_i(n)$ vanishes. An
identity is recovered by truncating the operators, keeping all the
right factors up to the division by this~$C_i(n)$ excluded. In that
situation, both terms of the identity vanish.
\end{theorem}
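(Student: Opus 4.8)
The plan is to trade the variable $x$ for the Euler operator $\vartheta=x\frac{d}{dx}$ (the same operator as in~\eqref{deq:hgm}) and to reduce both recurrences in \cref{thm:main} to identities between rational functions of $\vartheta$ with coefficients in $\mathbb C(n)$. For each of the six families, the Gauss--Legendre multiplication formula $\prod_{j=0}^{m-1}\bigl(\tfrac{\nu+j}{m}\bigr)_k=(\nu)_{mk}/m^{mk}$ collapses every block $\Delta(m;\cdot)$, so that $u_{n,k}:=[x^k]F_n$ has a closed form in which the dependence on $n$ is carried by $(\lambda+n)_{mk}$, and by $(\mu+n)_{mk}$ when $\epsilon=1$; the signs $\epsilon_1,\epsilon_2$ record whether these sit in the numerator or the denominator. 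Computing the three elementary ratios $u_{n+1,k}/u_{n,k}$, $u_{n,k-1}/u_{n,k}=[x^k](xF_n)/[x^k]F_n$ and $(k+1)u_{n,k+1}/u_{n,k}=[x^k](F_n')/[x^k]F_n$ from~\eqref{eq:rec_hgm}, and reading $k$ off $x^k$ through $\vartheta$, gives explicit relations of the form
\[
S_nF_n=\frac{(\lambda+n)\,(\mu+n+\epsilon_1m\vartheta)^{\epsilon}}{(\lambda+n+\epsilon_2m\vartheta)\,(\mu+n)^{\epsilon}}\,F_n,\qquad
xF_n=r(n,\vartheta)F_n,\qquad F_n'=s(n,\vartheta)F_n,
\]
with $r,s\in\mathbb C(n)(\vartheta)$ explicit (simple, using $(\lambda+n)_{m(k+1)}=(\lambda+n)_m(\lambda+n+m)_{mk}$ and $(\lambda+n)_{mk}=(\lambda+n)_m(\lambda+n+m)_{m(k-1)}$). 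Since $x$ conjugates $\vartheta$ to $\vartheta-1$, any operator in $S_n$ with coefficients in $\mathbb C(n)$ then acts on each of $F_n$, $xF_n$, $F_n'$ as an explicit element of $\mathbb C(n)(\vartheta)$; and since, for parameters in general position, all $u_{n,k}$ are nonzero, two such elements agree on $F_n$ iff they agree as rational functions.

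The next step is to verify, each by a short computation, that every elementary factor of \cref{thm:main} implements one elementary $\vartheta$-operation: for a constant $c$, $\mathcal F_{c,i}$ multiplies an expression of the shape $\bigl(\prod_\ell\frac{\vartheta+c_\ell}{\lambda+n+d_\ell+\epsilon_2m\vartheta}\bigr)F_n$ by $\frac{\vartheta+c}{\lambda+n+i-1+\epsilon_2m\vartheta}$, and $\mathcal C_i$ adjoins a factor $\frac1{\lambda+n+i+\epsilon_2m\vartheta}$. Hence $\mathcal A$ multiplies $F_n$ by $\prod_{i=1}^p\frac{\vartheta+a_i}{\lambda+n+i-1+\epsilon_2m\vartheta}$, $\mathcal B$ by the same product with $a_i$ replaced by $b_i-1$, $\mathcal L_i^{j}$ contributes $\prod_{\ell=i}^{j-1}\frac1{\lambda+n+\ell+\epsilon_2m\vartheta}$, $\mathcal F_{0,q+1}$ a factor $\vartheta$, and $(\epsilon_1\epsilon_2S_n)^{\theta}$ is the $\vartheta$-translate of $S_n^{\theta}$ — with $\theta=m$ exactly when $\epsilon_1=1$, which is what the $m$-step identities above call for. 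Substituting the dictionary, and using it also on $xF_n$ and $F_n'$, both members of each equation become $R(n,\vartheta)F_n$ for an explicit rational $R$; the two resulting rational functions of $(n,\vartheta)$ are then seen to be equal by a telescoping argument, which I would organise as an induction on $p$, then on $q$ (peeling off $\mathcal F_{a_p,p}$, then $\mathcal F_{b_q-1,q}$), leaving a parameter-free base case handled directly by the dictionary. (Equivalently one reads off the coefficient of each $x^k$, checks a rational identity in $n$, and passes from generic to all admissible parameters by specialisation.) Finally, \cref{lemma:explicit-coeffs} turns the products into the expanded recurrences of the forms~\eqref{eq:X} and~\eqref{eq:D}.

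For the degenerate case, assume $\epsilon=1$, $\epsilon_1=\epsilon_2$ and $\mu-\lambda=i_0\in\{0,\dots,\max(p-1,q)\}$, so that $C_{i_0}(n)=\tfrac{n+\lambda}{n+\mu}(\mu-\lambda-i_0)\equiv0$. This $C_{i_0}$ is exactly the denominator of $\mathcal C_{i_0}$ and also occurs inside $B_{i_0+1}(n,\cdot)$, the denominator of $\mathcal F_{\cdot,i_0+1}$ — hence of one factor of $\mathcal A$, of $\mathcal B$, or of $\mathcal F_{0,q+1}$ — so in each of the two equations \emph{both} members contain an offending division. The extra input is that, when $\mu=\lambda+i_0$, the multiplication formula makes $(\mu+n)_{mk}/(\lambda+n)_{mk}$ (or its reciprocal) a polynomial of degree $i_0$ in $mk$, so $F_n$ — and likewise $xF_n$ and $F_n'$ up to a $\vartheta$-shift of the numerator — acquires a prefactor that is a Pochhammer symbol of length $i_0$ in $\lambda+n+\epsilon_2 m\vartheta$ times an $n$-independent series, divided by $(\lambda+n)_{i_0}$ (respectively $(1-\mu-n)_{i_0}$ for family~IV). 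Applying the right factors up to, but not including, the division by $C_{i_0}$ — all of which are well defined, since $C_j\neq0$ for $j<i_0$ — cancels that Pochhammer prefactor exactly, by the telescoping of the previous step, leaving a series whose only $n$-dependence is the scalar $(\lambda+n)_{i_0}^{-1}$ (resp. $(1-\mu-n)_{i_0}^{-1}$). The surviving numerator $S_n-\tfrac{\epsilon\epsilon_1(n+\lambda)}{\epsilon_2(n+\epsilon\mu)}=S_n-\tfrac{n+\lambda}{n+\mu}$ then annihilates it, because $(\lambda+n)_{i_0}/(\lambda+n+1)_{i_0}=(\lambda+n)/(\lambda+n+i_0)=(\lambda+n)/(\mu+n)$ (and analogously $(1-\mu-n)_{i_0}/(-\mu-n)_{i_0}=(\lambda+n)/(\mu+n)$ for family~IV); so both truncated members are $0$.

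The main difficulty will not be conceptual but organisational: once the dictionary is set up, the delicate and sign-error-prone part is to carry everything out \emph{uniformly} through the single parameter set $\epsilon,\epsilon_1,\epsilon_2,\theta,\chi$ — checking that the number of $\mathcal C$-factors prescribed by the exponents $q+1-2m\chi\epsilon_1$, $p+2m\chi\epsilon_1$, $q-2m\chi\epsilon_1$ matches the numerator/denominator degree mismatch of $F_n$ (with the $2m$ correction of families~V and~VI, where $\lambda$ and $\mu$ lie on the same side), that the powers of $\epsilon_1\epsilon_2S_n$ come out right, and that the scalar $m^{m(\epsilon\epsilon_1-\epsilon_2)}\bigl((\lambda+n)_m/(\mu+n)_m^{\epsilon}\bigr)^{\epsilon_1}$ is exactly what remains after collecting all the $m^{mk}$'s and the $m$-step shifts.
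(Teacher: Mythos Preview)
Your approach is essentially the paper's, recast in the language of the Euler operator: where you write ``$R(n,\vartheta)F_n$'' the paper writes ``$R(n,k)\,U_{n,k}$'' for the coefficient of~$x^k$, and your dictionary entries for $\mathcal F_{c,i}$, $\mathcal C_i$ and $(\epsilon_1\epsilon_2S_n)^m$ are exactly the paper's identities $\mathcal F_{c,i}(\phi_{i-1}U_{n,k})=(k+c)\phi_iU_{n,k}$, $\mathcal C_i(\phi_iU_{n,k})=\phi_{i+1}U_{n,k}$, and the final $\psi\leftrightarrow\phi$ conversion. Two small remarks: your appeal to \cref{lemma:explicit-coeffs} is unnecessary, since the theorem is already stated in factored form; and in the degenerate case you give a slightly more explicit argument than the paper (which simply notes $B_{i_0+1}=0$), correctly observing that the numerator of both $\mathcal C_{i_0}$ and $\mathcal F_{c,i_0+1}$ collapses to $S_n-(n+\lambda)/(n+\mu)$ when $\mu-\lambda=i_0$ and that this annihilates $(\lambda+n)_{i_0}^{-1}$ times an $n$-independent series.
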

With extra care, the cases of parameters that make the hypergeometric
series be well defined only for sufficiently large or sufficiently
small~$n$ can be handled as well.

\section{Examples}\label{sec:examples}
\subsection{Legendre Polynomials}

One of the simplest examples, the Legendre polynomials, may help clarify
the notation. These polynomials are given as 
\[P_{n}\left(x\right)={{}_{2%
}F_{1}}\left(\left.{-n,n+1\atop1}\right|\frac{1-x}{2}\right).\]
Thus they equal $F_n((1-x)/2)$, where $F_n$ is the special case of
family~(V) with~$p=0$, $m=q=b_1=\lambda=\mu=1$. With these values of the parameters, the theorem gives~$\epsilon=1,\epsilon_1=1,\epsilon_2=-1,\theta=\chi=1$,
\begin{gather*}
a(n,0)=1,\quad\alpha(n,0)=\frac{1}{n+1},\quad
B_1(n,0)=2,\quad
B_2(n,0)=\frac{2n+3}{n+2},
\\
\mathcal B=\mathcal F_{0,1}=\frac1{2}(S_n-1),\quad\mathcal
A=\mathcal L_2^1=\mathcal L_2^2=1,\quad
\mathcal F_{0,2}=\frac{n+2}{2n+3}\left(S_n-\frac{n+1}{n+2}\right),\\ 
\mathcal L_1^2=\mathcal C_1=\frac{1}{2n+3}(S_n+1).\\ 
\intertext{The relations given by the theorem are therefore}
\begin{cases}-S_n (xF_n)=\frac{n+2}{2n+3}\left(S_n-\frac{n+1}
{n+2}\right)\frac1
{2}(S_n-1)(F_n), \\ 
\frac{1}{2(2n+3)}(S_n+1)(S_n-1)(F_n')=-S_n(F_n).
\end{cases}
\end{gather*}
Replacing $x$ by $(1-x)/2$ in the first equation and using the fact
that $F_n'((1-x)/2)=-2P_n'(x)$ in the second one retrieves
relations that can be seen to be equivalent to classical ones~\cite
[18.9.1,18.9.17]{OlverLozierBoisvertClark2010}
\[
\begin{cases}\frac{x-1}2P_{n+1}=\frac{1}{2(2n+3)}((n+2)P_{n+2}-
(2n+3)P_{n+1}+
(n+1)P_n),\\
\frac1{2n+3}(P_{n+2}'-P_n')=P_{n+1}.
\end{cases}
\]

\subsection{Extended Laguerre Polynomials}
These are the special case of family (II) with $m=1$ and $\lambda=1$.
Then $\epsilon=0,\epsilon_1=1,\epsilon_2=-1$ and $\theta=\chi=1$.
We give the explicit factorization of the relations given by 
\cref{thm:main}. The first one can be compared with the formula
given by Fields, Luke and Wimp~\cite[Cor.~2.2]{FieldsLukeWimp1968}\footnote{Fields, Luke and Wimp take one of the $a_i$s to be~1, but this does
not impact the results mentioned here.}.

The quantities involved in the theorem are
\[\mathcal C_i=\frac{1}{n+1}S_n,\quad
\mathcal F_{c,i}=\frac{n+i+c}{n+1}S_n-1,\]
whence the formulas
\begin{multline*}
-\left(\frac{S_n}{n+1}\right)^{\max(0,q-p-1)}S_n\prod_
{i=1}^p\left(\frac{n+a_i+i}{n+1}S_n-1\right)(xF_n)=\\
(n+1) \left(\frac{S_n}{n+1}\right)^{\max(0,p+1-q)}\left(\frac{n+q+1}
{n+1}S_n-1\right)\prod_
{i=1}^q\left(\frac{n+b_i+i-1}{n+1}S_n-1\right)(F_n),\\ 
(n+1)\left(\frac{S_n}{n+1}\right)^{\max(0,p+2-q)}\prod_
{i=1}^q\left(\frac{n+b_i+i-1}{n+1}S_n-1\right)(F_n')=\\ 
\left(\frac{S_n}{n+1}\right)^{\max(0,q-p-2)}S_n\prod_
{i=1}^p\left(\frac{n+a_i+i}{n+1}S_n-1\right)(F_n),
\end{multline*}
where the products are to be interpreted as follows for
any~$M_i$:
\[\prod_{i=j}^kM_i=\begin{cases}M_k\dotsm M_j,\quad&\text{if $k\ge
j$,}\\ 1&\text{otherwise.}\end{cases}\]

The explicit hypergeometric coefficients given by Fields, Luke and
Wimp come from \cref{lemma:explicit-coeffs}. Note however that
they give a non-homogeneous equation, while our result is a homogeneous one.

\subsection{Special Case}
As an illustration of the special case at the end of the theorem,
consider
\[F_n={}_{2}F_{2}\!\left(\left.\begin{matrix}
\lambda+n+1,a\\
\lambda+n,b
\end{matrix}\right|x\right).\]
The second identity of the theorem does not lead to any division by~0
and gives
\begin{multline*}
\left(-\frac{(n+\lambda+1)(n+\lambda+1-b)}
{n+\lambda}S_n+n+\lambda+2-b\right)(F_n')=\\
\frac{n+\lambda+1}{n+\lambda}S_n\left(-\frac{(n+\lambda+1)(n+\lambda-a)}
{n+\lambda}S_n+n+\lambda+1-a
\right)(F_n).
\end{multline*}
The first one involves $\mathcal C_1$ and $\mathcal F_{0,2}$, both
with a division by $C_1(n)=0$. Stopping before this division gives an
identity where both sides are~0:
\begin{multline*}
\left(S_n-\frac{n+\lambda}{n+\lambda+1}\right)\left(-\frac{(n+\lambda+1)(n+\lambda-a)}
{n+\lambda}S_n+n+\lambda+1-a
\right)(xF_n)=\\
\left(S_n-\frac{n+\lambda}{n+\lambda+1}\right)
\left(-\frac{(n+\lambda+1)(n+\lambda+1-b)}
{n+\lambda}S_n+n+\lambda+2-b\right)(F_n)
=0.\end{multline*}

\section{Proof of the main result}
If~$F_n$ is any of the power series from~I to~VI and  $U_{n,k}$ denotes the
coefficient of~$x^k$ in~$F_n$, we compare the actions of certain recurrence operators independent
of~$k$ on the coefficient of~$x^k$ in~$xF_n$ and $F_n$ (for
$\mathcal X$),  resp. the coefficient of~$x^{k-1}$ in~$F_n'$ and~$F_n$ (for $\mathcal D$),
i.e.,
\begin{equation}\label{eq:very-simple}
U_{n,k-1} \textrm{ and } U_{n,k},\quad \textrm{ resp. }\quad kU_
{n,k}  \textrm{ and } U_{n,k-1}.
\end{equation}
The proof is constructive: iteratively, operators in the shift~$S_n$
\emph{with
coefficients that do not depend on~$k$} are built so that in the
end~$\mathcal X$ and $\mathcal D$ are obtained.

The following two identities are satisfied
by $U_{n,k}$
in all cases:
\begin{equation}
\frac{U_{n+1,k}}{U_{n,k}}
=a(n,k),\qquad
\frac{U_{n,k-1}}{U_{n,k}}=\psi_0(n,k)\frac{(b_1+k-1)\dotsm
(b_q+k-1)k}{(a_1+k-1)\dotsm(a_p+k-1)},\label{eq:shifts}
\end{equation}
with a $\psi_0$ that does not depend on~$(a_p)$ and~$(b_q)$ and with
$a(n,k)$ from the theorem.
Writing~$U_{n+1,k-1}$ as either 
$\left.U_{n,k-1}\right|_{n\mapsto n+1}$ or 
$\left.U_{n+1,k}\right|_{k\mapsto k-1}$ and using the relations from
\cref{eq:shifts} shows
that $\psi_0$
satisfies
the
relation
\begin{equation}\label{eq:psi}
\psi_0(n+1,k)a(n,k)=\psi_0(n,k)a(n,k-1).
\end{equation}
From there and the initial value~$\psi_0(0,k)$, it follows that
\begin{equation}\label{eq:defpsi_0}
    \psi_0(n,k)=(\epsilon_1\epsilon_2)^mm^{m (\epsilon\epsilon_1-\epsilon_2)}
 \frac{(n+\lambda+\epsilon_2km-\delta_{\epsilon_2,1}m)_{m}^{\epsilon_2}}{
(n+\mu+\epsilon_1 km-\delta_{\epsilon_1,1}m)_{m}^{\epsilon\epsilon_1}},   
\end{equation}
with $\delta$ the Kronecker symbol.
A key property satisfied by these 6~families and that makes our approach work 
is that \cref{eq:shifts} implies that for any~$c$
and~$i$,
\begin{equation}\label{eq:key}\frac{a(n,k)}{\alpha(n,k)}-\frac{a
(n,-c)\alpha(n+i-1,-c)}{\alpha
(n,-c)\alpha(n+i-1,k)}=(k+c){B_i(n,c)},\end{equation}
synthesizing a crucial factor~$k+c$, while $B_i(n,c)$ does not depend on~$k$
and therefore neither does $\mathcal F_{c,i}$. 

We first assume that $C_i(n)$ does not vanish for $i\in\mathbb N$, and
therefore that neither does $B_i(n,c)$
(the other case is addressed at the end of the proof.)
Introducing
\[
\phi_i(n,k)=\prod_{j=0}^{i-1}\alpha(n+j,k),\qquad
\psi_i(n,k)=\psi_0(n,k)\phi_i(n,k-1),
\]
a direct computation using~\cref{eq:shifts,eq:psi,eq:key} shows that
\begin{equation}\label{eq:fcc+1}
\begin{split}
\mathcal F_{c,i}(\phi_{i-1}(n,k)U_{n,k})&=(k+c)\phi_i
(n,k)U_{n,k},\\
\mathcal F_{c+1,i}(\psi_{i-1}(n,k)U_{n,k})&=(k+c)\psi_i
(n,k)U_{n,k}.
\end{split}
\end{equation}
It follows by induction that
\begin{equation}\label{eq:rec-A-B}
\begin{split}
\mathcal F_{0,q+1}\mathcal B(U_{n,k})&=\phi_{q+1}(n,k)(b_1+k-1)\dotsm(b_q+k-1)kU_
{n,k},\\
\mathcal A(U_{n,k-1})&=\quad\psi_p(n,k)(b_1+k-1)\dotsm(b_q+k-1)kU_{n,k},\\
\mathcal B(kU_{n,k})&=\quad\phi_{q}(n,k)(b_1+k-1)\dotsm(b_q+k-1)kU_{n,k}.
\end{split}
\end{equation}
The first two identities will be used to derive~$\mathcal X$;
$\mathcal D$ will come from the last two.

Note that $\phi_p$ and $\psi_q$ do not depend on the parameters $a_i$
and $b_i$ of the hypergeometric series. These operators thus allow for
reducing the computation to the case when $p=q=0$. The next step is to
increase the smaller of the indices~$q+1$ and~$p$
until a given target is reached. This is obtained with $\mathcal C_i$,
as a direct computation using \cref{eq:shifts,eq:psi} shows that it
increases indices:
\begin{equation}\label{eq:Ci}
  \mathcal C_i(\phi_i(n,k)U_{n,k})=\phi_{i+1}(n,k)U_
{n,k},\qquad
\mathcal C_i(\psi_i(n,k)U_{n,k})=\psi_{i+1}(n,k)U_{n,k}.
\end{equation}
By induction, it follows that
\begin{equation} \label{eq:Lij}
\begin{split}
  \mathcal L_i^j(\phi_i(n,k)U_{n,k})&=\phi_{\max(i,j)}(n,k)U_
{n,k},\\ 
\mathcal L_i^j(\psi_i(n,k)U_{n,k})&=\psi_{\max(i,j)}(n,k)U_{n,k}.
\end{split}
\end{equation}
Note that since $\mathcal C_i$ does not depend on~$k$, the same
identities hold with factors depending on~$k$ but not~$n$ on both
sides, such as the factors in \cref{eq:rec-A-B}.

The final step is to map a $\psi_i$ to a $\phi_j$ or the converse.
This is obtained by considering $\psi_0(n+m,k)U_{n+m,k}/U_{n,k}$:
using \cref{eq:defpsi_0,eq:shifts} and the definition of the
sequence~$a(n,k)$ gives
\[\psi_0(n+m,k)\frac{U_{n+m,k}}{U_{n,k}}=
(\epsilon_1\epsilon_2)^mm^{m(\epsilon\epsilon_1-\epsilon_2)}
L(k,m,\mu)M(k,m,\mu),\]
with 
\begin{align*}
L(k,m,\lambda)&=
\frac{
(n+\lambda)_{m}(n+\lambda+\epsilon_2km+(1-\delta_{\epsilon_2,1})m)_m
^{\epsilon_2}}{(n+\lambda+\epsilon_2mk)_m},\\ 
M(k,m,\mu)&=\frac{(n+\epsilon(\mu+\epsilon_1km))_m}{{
(n+\epsilon\mu)_m
(n+\mu+\epsilon_1 km+(1-\delta_{\epsilon_1,1})m)_m^
{\epsilon\epsilon_1}}}.
\end{align*}
Both products simplify depending on the values of
$\epsilon,\epsilon_1,\epsilon_2$, giving
\[
L(k,m,\lambda)={(n+\lambda)_m}\times\begin{cases}
1&\text{if $\epsilon_2=1$,}\\
\frac{1}
{(n+\lambda-mk)_{2m}}&\text{otherwise.}
\end{cases}
\]
\[M(k,m,\lambda)=
\frac1{(n+\mu)_m^\epsilon}\times\begin{cases}
1&\text{if $\epsilon_1=1$,}\\
(n+\mu-km)_{2m}&\text{otherwise.}
\end{cases}
\]
Finally, we also have
\[\phi_p(n+m,k-1)=\begin{cases}\phi_p(n,k)&\text{if $\epsilon_2=1$},\\
\frac1{(n+2m+\lambda-mk)_p}&\text{otherwise.}
\end{cases}\]
Combining these identities shows that, when $\epsilon_1=1$,
\begin{align*}
(\epsilon_1\epsilon_2S_n)^m&(\psi_p(n,k)U_{n,k})\\
&
=(\epsilon_1\epsilon_2)^m\psi_p(n+m,k)\frac{U_{n+m,k}}{U_{n,k}}U_
{n,k},\\
&=(\epsilon_1\epsilon_2)^m\psi_0(n+m,k)\frac{U_{n+m,k}}{U_
{n,k}}\phi_p(n+m,k-1)U_{n,k},\\
&=m^{m(\epsilon-\epsilon_2)}L(k,m,\mu)M
(k,m,\mu)\phi_p(n+m,k-1)U_{n,k},\\
&=m^{m(\epsilon-\epsilon_2)}\frac{(n+\lambda)_m}{(n+\mu)_m^\epsilon}
U_{n,k}\times
\begin{cases}\phi_p(n,k)&\text{if $\epsilon_2=1$,}\\
\frac1{(n+\lambda-mk)_{2m}(n+2m+\lambda-mk)_p}&\text{otherwise,}
\end{cases}\\
&=m^{m(\epsilon-\epsilon_2)}\frac{(n+\lambda)_m}{(n+\mu)_m^\epsilon}
\phi_{p+2m\chi}U_{n,k}.
\end{align*}
A similar derivation gives
\[(\epsilon_1\epsilon_2S_n)^m(\phi_q(n,k)U_{n,k})=
m^{m(\epsilon+\epsilon_2)}
\frac{(\lambda+n)_m}{(\mu+n)_m^\epsilon}\psi_{q+2m\chi}
(n,k)U_{n,k}\quad(\epsilon_1=-1).
\]

As a consequence, we obtain that
\[\begin{split}
&\left((\epsilon_1\epsilon_2S_n)^\theta\mathcal L_{p}^
{q+1-2m\chi\epsilon_1}\right)(\psi_p(n,k)U_{n,k})=\\
&\qquad\begin{cases}m^{m(\epsilon-\epsilon_2)}
\frac{(\lambda+n)_m}{
(\mu+n)_m^\epsilon}\phi_{\max(p+2m\chi,q+1)}(n,k)U_{n,k},\quad&
\text{if $\theta=m$,}\\
\psi_{\max(p,q+1+2m\chi)}(n,k)U_{n,k},\quad&\text{otherwise.}
\end{cases}\\
&\left((\epsilon_1\epsilon_2S_n)^{m-\theta}\mathcal L_{q+1}^
{p+2m\chi\epsilon_1}\right)(\phi_{q+1}(n,k)U_{n,k})=\\
&\qquad\begin{cases}
\phi_{\max(p+2m\chi,q+1)}(n,k)U_{n,k},&\quad\text{if $\theta=m$,}\\
m^{m(\epsilon+\epsilon_2)}\frac{(\lambda+n)_m}{
(\mu+n)_m^\epsilon}\psi_{\max(p,q+1+2m\chi)}(n,k)U_
{n,k},&\quad
\text{otherwise.}
\end{cases}
\end{split}\]
Together with \cref{eq:rec-A-B} and \cref{eq:very-simple}, this concludes the proof of
the first
recurrence. The second one is obtained with $q$ in the place of $q+1$.

We conclude with the case when $C_i(n)$ vanishes for some~$i\in\{0,\dots,\max(p-1,q)\}$, that are the values of~$i$ used in
the theorem. From the definition of~$C_i$, this occurs when $\epsilon
= 1$, $\epsilon_1 = \epsilon_2$ and $i_0 := \mu - \lambda \in \{ 0,
\ldots, \max( p-1, q ) \}$. Then 
  $\chi = 0, C_{i_0} =B_{i_0+1} = 0$ and $C_{i} B_{i+1} \neq 0$ for $i
  \neq i_0-1$. We focus on the proof of the first relation between
  $xF_n$ and $F_n$ (the proof of the other relation is similar).
There are two cases depending on whether $i_0\le\min(p-1,q)$ or not. 
Eqs.~\eqref{eq:fcc+1} hold as long as $i < i_0$. Let $\mathcal G_i:=
B_i \mathcal F_i $. 
If $i_0\le\min(p-1,q)$, at index
$i_0$, \cref{eq:fcc+1} can be replaced by:
\begin{align*}
\mathcal G_{c,i_0+1}(\phi_{i_0}(n,k)U_{n,k})&= B_{i_0+1}(n,c) (k+c)\phi_i (n,k)U_{n,k} = 0,\\
\mathcal G_{c+1,i_0+1}(\psi_{i_0}(n,k)U_{n,k})&= B_{i_0+1}(n,c+1)(k+c)\psi_i (n,k)U_{n,k}=0.
\end{align*}
Thus in that case, with the notation $b_
{q+1}:=1$, the first two equations of~\eqref{eq:rec-A-B} become
\begin{align}
\mathcal G_{b_{i_0+1}-1,i_0+1}\mathcal F_{b_{i_0}-1,i_0}\cdots \mathcal F_{b_{1}-1,1}  (U_{n,k})&= 0,  \notag \\ 
\mathcal G_{a_{i_0+1},i_0+1}\mathcal F_{a_{i_0},i_0}\cdots \mathcal F_{a_{1},1} (U_{n,k-1})&= 0, \label{eq:Ga}
\end{align}
proving the first relation.

In the other case, Eqs.~\eqref{eq:rec-A-B} hold for all
required values
of~$i$. Eqs.~\eqref{eq:Ci} hold for $i < i_0$. Let $\mathcal D_{i} :=
C_i \mathcal C_i$. At index $i_0$, \eqref{eq:Ci} can be replaced by:
\begin{align*}
  \mathcal D_{i_0} (\phi_{i_0}(n,k)U_{n,k}) & =C_{i_0} \phi_{i_0+1}(n,k)U_{n,k} = 0,\\
\mathcal D_{i_0} (\psi_{i_0} (n,k)U_{n,k}) & =C_{i_0} \psi_{i_0+1}(n,k)U_{n,k}=0.
\end{align*}
If $p-1 < i_0 \leq q$, \cref{eq:Ga} holds and
\cref{eq:Lij} can be replaced by: 
\[
  \mathcal D_{i_0} \mathcal C_{i_0-1} \cdots \mathcal C_{p}  (\psi_{p} (n,k)U_{n,k}) = 0,
\]
which, combined with the second equation of~\eqref{eq:rec-A-B}, yields
\[
  \mathcal D_{i_0} \mathcal C_{i_0-1} \cdots \mathcal C_{p}  \mathcal A U_{n,k-1} = 0,
\]
concluding the proof of this case. The case  $q < i_0 \leq p - 1$ is
similar.

\section{Existence of Further Relations}
\Cref{thm:main} gives equations of types~$\eqref{eq:D}$ 
and~\eqref{eq:X} for some hypergeometric series. To conclude, we first
prove that there exist hypergeometric families
that do not satisfy such equations.
 Mixed equations of type~\eqref{eq:M} are also of
interest. They are considered in \cref{subsec:mixed}.

\subsection{More hypergeometric families?}
Not all
hypergeometric families satisfy equations like~\eqref{eq:D} or~\eqref{eq:X}. For
instance,
if~$\Phi_n$ is defined by
\[\Phi_n(x)={}_2F_1\left(\left.\begin{matrix}n+1,n+1\\ 1
\end{matrix}\right|x\right)=\sum_{k\ge0}{\frac{((n+1)_k)^2}{k!^2}x^k}=\frac{\sum_{i=0}^n{\binom{n}{i}^2x^i}}{(1-x)^{2n+1}},\]
then computing the behaviour at~1 of both members of an
identity~\eqref{eq:X} of the form
\[\sum_{m=0}^tA_m\Phi_{n-m}(x)=x\sum_{m=0}^sB_m\Phi_{n-m}(x)\]
with at least one of~$A_0$ or $B_0$ being nonzero implies
\[\frac{A_0\binom{2n}{n}}{(1-x)^{2n+1}}+O\!\left(\frac1{(1-x)^{2n-1}}\right)
=\frac{B_0\binom{2n}{n}}{(1-x)^{2n+1}}
-\frac{B_0(n/2+1)\binom{2n}{n}}{(1-x)^{2n}}+O\!\left(\frac1{(1-x)^{2n-1}}\right),
\]
a contradiction.

The key property shared by the families of \cref{table:families} that leads
to the existence of the relations~\eqref{eq:D} and~\eqref{eq:X} is given in
\cref{eq:key}. It could be the case that a variant of this
identity allows for generalizing the approach to other hypergeometric families. We have not been able
to do so.

\subsection{Mixed Difference-Differential Equations}
\label{subsec:mixed}
We now consider the existence of equations of type \eqref{eq:M}.
We show that ``\eqref{eq:X} + \eqref{eq:D}'' is strictly more
general than
\eqref{eq:M} in two steps. First \cref{prop:XM-give-XD} shows an
inclusion.
 Next, cases where 
\cref{thm:main} applies and no equation of type~\eqref{eq:M} exists
are exhibited in 
\S\ref{subsubsec:DmgM}. Finally, there are cases where the theorem
applies and a mixed equation~\eqref{eq:M} exists. This seems to be
restricted to hypergeometric series with few parameters;
those we found are listed in \S\ref{subsub:Mforhyp}.
\subsubsection{``\texorpdfstring{\eqref{eq:X}}{(X)}+\texorpdfstring{\eqref{eq:M}}{(M)}'' implies 
  \texorpdfstring{\eqref{eq:D}}{(D)}}\label{eq:MtoD}
\begin{proposition}\label{prop:XM-give-XD} If a family of power series
$(\phi_n(x))_n$
satisfies
a relation of type~\eqref{eq:X} and a relation of type~\eqref{eq:M},
then it satisfies a relation of type~\eqref{eq:D}.
\end{proposition}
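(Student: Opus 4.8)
The plan is to eliminate the variable $x$ from the mixed equation~\eqref{eq:M} by using the recurrence~\eqref{eq:X} to rewrite $\pi(x)$ times a shifted sequence as a pure recurrence in~$n$. Concretely, suppose $(\phi_n)$ satisfies $\pi(x)\phi_n'(x)=\sum_{j=-s}^{t}E_j(n)\phi_{n+j}(x)$ with $\pi$ of degree~$d$, and also satisfies an equation of type~\eqref{eq:X}, which (as noted after~\eqref{eq:X}) says that $(\phi_n)$ obeys a linear recurrence $\sum_i(A_i(n)+xB_i(n))\phi_{n+i}(x)=0$ whose coefficients are polynomials of degree at most~$1$ in~$x$. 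First I would observe that this last recurrence lets one express $x\phi_{n+i}$, for any fixed~$i$, as a linear combination of finitely many $\phi_{n+j}$ with coefficients that are rational functions of~$n$ and do not depend on~$x$ — i.e.\ multiplication by~$x$ is realized by a recurrence operator modulo the relation~\eqref{eq:X}. Iterating this $d$ times realizes multiplication by~$\pi(x)$, and in particular by $\pi(x)$ itself, as such an operator.

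Next I would apply this to the right-hand side of~\eqref{eq:M}: since each $E_j(n)$ is $x$-free, and multiplication by $\pi(x)$ on $\phi_{n+j}$ produces an $x$-free recurrence combination, the quantity $\pi(x)\phi_n'(x)$ — which equals that right-hand side — can itself be expressed as $\sum_k G_k(n)\phi_{n+k}(x)$ with $x$-free rational coefficients $G_k$. But also $\pi(x)\phi_n'(x)$, rewritten via~\eqref{eq:M} and then multiplying through the derivative relation, should be matched against $\pi(x)$ times the derivative; the cleanest route is to instead start from the identity $\bigl(\pi(x)\phi_n(x)\bigr)' = \pi'(x)\phi_n(x) + \pi(x)\phi_n'(x)$, substitute~\eqref{eq:M} for the second summand, and use the "multiplication by a polynomial in~$x$ is an $x$-free recurrence operator" principle on both $\pi'(x)\phi_n(x)$ and on $\bigl(\pi(x)\phi_n(x)\bigr)'$ after expanding $\pi(x)\phi_n(x)$ as a finite $x$-free combination of the $\phi_{n+k}$. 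Collecting terms, one side is $\bigl(\sum_k C_k(n)\phi_{n+k}(x)\bigr)'$ and the other is $\sum_k D_k(n)\phi_{n+k}(x)$, which is exactly an equation of type~\eqref{eq:D}; clearing denominators in~$n$ makes the $C_k,D_k$ polynomial.

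The main obstacle I anticipate is bookkeeping rather than a conceptual difficulty: one must check that realizing "multiplication by~$x$" via~\eqref{eq:X} is legitimate, i.e.\ that the recurrence~\eqref{eq:X} genuinely lets one solve for $x\phi_{n}$ in terms of an $x$-free combination of shifts, which requires the leading coefficient $A_m(n)$ (or the relevant $B$-coefficient) to be nonzero as a polynomial in~$n$ — true by definition of the equation being nontrivial — and to keep careful track of which shift ranges appear, so that the number of terms stays finite and the final relation has the shape~\eqref{eq:D} with polynomial coefficients in~$n$. A secondary subtlety is the role of $\pi(x)$ having degree possibly larger than~$1$: each multiplication by~$x$ may widen the window of shifts, so after $d=\deg\pi$ iterations the operators $\mathcal C$ and $\mathcal D$ in~\eqref{eq:D} have orders controlled by $d$ together with the orders of~\eqref{eq:X} and~\eqref{eq:M}; making this explicit is routine but needs care. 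Once these points are handled, the construction is entirely linear-algebraic and terminates, giving the desired~\eqref{eq:D}.
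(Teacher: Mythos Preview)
Your approach is essentially the same as the paper's: realize multiplication by $\pi(x)$ and $\pi'(x)$ as recurrence operations via~\eqref{eq:X}, apply the product rule $(\pi\phi_n)'=\pi'\phi_n+\pi\phi_n'$, substitute~\eqref{eq:M} for $\pi\phi_n'$, and collect. The paper packages the first step as evaluating $\pi$ at the Ore fraction $\mathcal F_{\mathcal X}=(A_m S_n^m+\cdots+A_0)^{-1}(B_\ell S_n^\ell+\cdots+B_0)$, which cleanly handles the point you flag as bookkeeping---namely that \eqref{eq:X} does not literally give $x\phi_n$ as a combination of shifts but only $\mathcal A(x\phi_n)=\mathcal B(\phi_n)$, so iterating requires composing left fractions (via $\lclm$) and then clearing the common operator denominator at the end; your informal ``multiplication by~$x$ is realized by a recurrence operator modulo~\eqref{eq:X}'' is exactly this, and once phrased that way the argument goes through as you describe.
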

\begin{proof}
We give a simple proof in terms of fractions of Ore
polynomials \cite{Ore1933}. It can be turned into
an effective proof by expressing each of the steps using the
extended Euclidean algorithm.

\Cref{eq:X} amounts to a fraction 
\[\mathcal F_\mathcal X=(A_m(n)S_n^{m}+\dots+A_0(n))^{-1}(B_\ell
(n)S_n^\ell+\dots+B_0(n))\]
that maps the sequence~$(\phi_n(x))_n$ to the sequence $(x\phi_n
(x))_n$.
Evaluating the polynomial~$\pi$ at this fraction gives another
fraction~$\mathcal F_\pi=\pi(\mathcal F_\mathcal X)$ that maps~$
(\phi_n(x))_n$ to~$(\pi(x)\phi_n
(x))_n$. Similarly a fraction~$\mathcal F_{\pi'}$ maps~$(\phi_n(x))_n$ to~$(\pi'(x)\phi_n
(x))_n$. Letting $$\mathcal P=E_{-s}(n)S^{-s}+\dots+E_{t}(n)S^t$$
and differentiating $\pi(x)\phi_n(x)$ gives
\[
  (\mathcal F_\pi(\phi_n))'=(\pi(x)\phi_n)'=\pi'(x)\phi_n+\pi(x)\phi_n'=
  (\mathcal F_{\pi'}+\mathcal P)(\phi_n).
\]
Reducing to the same denominator on both sides yields a
relation
of type~\eqref{eq:D}.
\end{proof}

\subsubsection{\texorpdfstring{\eqref{eq:D}}{(D)}
is more general than \texorpdfstring{\eqref{eq:M}}{(M)}}
\label{subsubsec:DmgM}
The functions
\begin{multline*}
F_n(x)={}_{1}F_{2}\!\left(\left.\begin{matrix}
n+2\\
n,1
\end{matrix}\right|x\right)=\sum_{k\ge0}\frac{(n+k)(n+k+1)x^k}
{n(n+1)\,k!^2}\\
=
\left(1+\frac x{n(n+1)}\right)I_0(2\sqrt{x})+\frac{2n+1}{n(n+1)}
\sqrt{x}I_0'(2\sqrt{x}), \end{multline*}
where $I_0$ is a modified Bessel function, form a special case of
family~III (with $p=\lambda=0, m=q=b_1=1, \mu=2$). 
From the
derivative
\[
F_n'(x)=\frac2{n}I_0(2\sqrt{x})+\left(\frac1{
\sqrt{x}}+\frac{\sqrt x}{n(n+1)}\right)I_0'(2
  \sqrt{x}),
\]
it follows that the existence of a relation of type~\eqref{eq:M}
for~$F_n$ would imply the existence of a linear differential equation
of order~1 for~$I_0$ with polynomial coefficients. This is impossible,
for instance because~$I_0$ has an infinite number of complex zeros.

By contrast, as a further illustration of the general situation in
\cref{thm:main} with
$\epsilon=\epsilon_1=\epsilon_2=1$, this
function satisfies the following equation of type~\cref{eq:X}:
\[\frac12S_n\left(\frac{n+2}nS_n-1\right)^2(xF_n)=
(S_n-1)n(n/2+1)(S_n-1)(F_n).\]
This is readily checked: both operators map~$(2n+1)/(n(n+1))$ to~0;
the first one maps~1 to~$1/((n+1)(n+2))$ and~$1/(n(n+1))$ to~0;
the second one maps~1 to~0 and~$1/(n(n+1))$ to~$1/((n+1)(n+2))$.

The equation of type~\eqref{eq:D} provided by the theorem
is
\[-(n/2+1)(S_n-1)(F_n')=\frac{n+2}{2n}S_n\left(
\frac{n+2}nS_n-1\right)(F_n).\]
Again, this can be checked directly by observing that the left
operator maps $
(1,1/(n(n+1)),1/n)$ to $
(0,1/(n(n+1)),(n+2)/(2n(n+1)))$ while the right one maps $(1,1/
(n
(n+1)),(2n+1)/(n(n+1)))$ to $((n+2)/(n(n+1)),0,1/(n(n+1)))$.

\subsubsection{Equations of type~\texorpdfstring{\eqref{eq:M}}{
(M)}}\label{subsub:Mforhyp}
By \cref{thm:main}, families I~to~VI satisfy relations of type \eqref{eq:X}
and \eqref{eq:D}. However, apart from types~I and~II, a relation of
type 
\eqref{eq:M} 
exists only in special cases, for low numbers of parameters.
We now list such relations. In all cases, the proof reduces
to comparing the coefficients of~$x^k$ on both sides.

\smallskip
\paragraph{\em Families I and II}
These families have only one occurrence of the
parameter~$n$. The value~$\epsilon=0$ leads to
\[a(n,k)=\frac{n+\lambda}{n+\lambda+\epsilon_2mk}\] in the identity
$U_{n+1,k}/U_{n,k}=a(n,k)$. It follows that
\[\frac1{a(n-1,k)}-1=\frac{\epsilon_2mk}{n+\lambda-1}\]
which leads to the existence of a difference-differential equation of
type \cref{eq:M} for all these functions:
\[xF_n'=\frac{n+\lambda-1}{\epsilon_2m}(F_{n-1}-F_n).\]

\smallskip
\paragraph{\em Families III and IV}
For family III, we find the mixed equations
\begin{align*}
F_n&={}_{1}F_{1}\!\left(\left.\begin{matrix}
\mu+n\\
\lambda+n
\end{matrix}\right|x\right):&\quad F_n'&=\frac{n+\mu}{n+\lambda}F_n;\\
F_n&={}_{2}F_{1}\!\left(\left.\begin{matrix}
\mu+n,a\\
\lambda+n
\end{matrix}\right|x\right):&\quad (1-x)F_n'&=\frac{(n+\mu)
(n+\lambda-a)}{n+\lambda}F_{n+1}+(n+\mu)F_n.
\end{align*}
Changing $n$ into~$-n$ in these two relations gives the
analogues
for type IV.

\smallskip
\paragraph{\em Family V}
We obtain mixed equations in two cases:
\begin{align*}
&F_n={}_{2}F_{0}\!\left(\left.\begin{matrix}
1-\lambda-n,\mu+n\\
-
\end{matrix}\right|x\right):\\
&-x^2F_n'=
\frac{(n+\mu)(n+\lambda-1)}{(2n+\lambda+\mu)
(2n+\lambda+\mu-1)}F_{n+1}-{2\frac{(n+\mu)(n+\lambda-1)}{(2n+\lambda+\mu)
(2n+\lambda+\mu-2)}F_n}\\
&\qquad\qquad
+\frac{(n+\mu)(n+\lambda-1)}{
(2n+\lambda+\mu-1)(2n+\lambda+\mu-2)}F_{n-1};\\
&F_n={}_{2}F_{1}\!\left(\left.\begin{matrix}
1-\lambda-n,
\mu+n\\
b
\end{matrix}\right|x\right):\\
&\quad x(1-x)F_n'=
\frac{(n+\mu)(n+\lambda-1)(n+\lambda+b-1)}{(2n+\lambda+\mu)
(2n+\lambda+\mu-1)}F_{n+1}\\
&\quad-{\frac{(n+\mu)(n+\lambda-1)(\lambda-\mu+2b-2)}{(2n+\lambda+\mu)
(2n+\lambda+\mu-2)}F_n}
-\frac{(n+\mu)(n+\lambda-1)(n+\mu-b)}{
(2n+\lambda+\mu-1)(2n+\lambda+\mu-2)}F_{n-1};
\end{align*}

\smallskip
\paragraph{\em Family VI}
There, more mixed relations can be found:
\begin{align*}
&F_n={}_{0}F_{2}\!\left(\left.\begin{matrix}
-\\
\lambda+n,1-\mu-n
\end{matrix}\right|x\right):\\
&\quad -F_n'=
\frac{(n+\mu)}{(n+\lambda)(2n+\lambda+\mu)
(2n+\lambda+\mu-1)}F_{n+1}
+{\frac{2}{(2n+\lambda+\mu)
(2n+\lambda+\mu-2)}}F_n\\
&\qquad
+\frac{(n+\lambda-1)}{
(n+\mu-1)(2n+\lambda+\mu-1)(2n+\lambda+\mu-2)}F_{n-1};\\
\end{align*}
\begin{align*}
&F_n={}_{1}F_{2}\!\left(\left.\begin{matrix}
a_1\\
\lambda+n,1-\mu-n
\end{matrix}\right|x\right):\\
&\quad F_n'=
\frac{(n+\mu)(n+\lambda-a_1)}{(n+\lambda)(2n+\lambda+\mu)
(2n+\lambda+\mu-1)}F_{n+1}
+{\frac{\lambda-\mu-2a_1}{(2n+\lambda+\mu)
(2n+\lambda+\mu-2)}}F_n\\
&\qquad
-\frac{(n+\lambda-1)(n+\mu+a_1-1)}{
(n+\mu-1)(2n+\lambda+\mu-1)(2n+\lambda+\mu-2)}F_{n-1};\\
&F_n={}_{2}F_{2}\!\left(\left.\begin{matrix}
a_1,a_2\\
\lambda+n,1-\mu-n
\end{matrix}\right|x\right):\\
&\quad F_n'=
-\frac{(n+\mu)(n+\lambda-a_1)(n+\lambda-a_2)}{(n+\lambda)
(2n+\lambda+\mu)
(2n+\lambda+\mu-1)}F_{n+1}\\
&\qquad
+\left({\frac12+\frac{(\lambda-\mu-2a_1)(\lambda-\mu-2a_2)}
{4}\left(\frac1{2n+\lambda+\mu}-\frac1
{2n+\lambda+\mu-2}\right)}\right)F_n\\
&\qquad
-\frac{(n+\lambda-1)(n+\mu+a_1-1)(n+\mu+a_2-1)}{
(n+\mu-1)(2n+\lambda+\mu-1)(2n+\lambda+\mu-2)}F_{n-1};\\
&F_n={}_{3}F_{2}\!\left(\left.\begin{matrix}
a_1,a_2,a_3\\
\lambda+n,1-\mu-n
\end{matrix}\right|x\right):\\
&\quad (1-x)F_n'=
-\frac{(n+\mu)(n+\lambda-a_1)(n+\lambda-a_2)(n+\lambda-a_3)}{
(n+\lambda)
(2n+\lambda+\mu)
(2n+\lambda+\mu-1)}F_{n+1}\\
&\qquad
+\left(\frac{2(a_1+a_2+a_3)+\mu-\lambda-2}4\right.\\
&\qquad\qquad-\frac{
(\lambda-\mu-2a_1)
(\lambda-\mu-2a_2)(\lambda-\mu-2a_3)}
{8}\times\\
&\qquad\qquad\qquad\qquad\left.\left(\frac1{2n+\lambda+\mu}-\frac1
{2n+\lambda+\mu-2}\right)\right)F_n\\
&\qquad
-\frac{(n+\lambda-1)(n+\mu+a_1-1)(n+\mu+a_2-1)(n+\mu+a_3-1)}{
(n+\mu-1)(2n+\lambda+\mu-1)(2n+\lambda+\mu-2)}F_{n-1}.
\end{align*}

\bibliographystyle{abbrv}

\end{document}